\newtheorem{theorem}{\sc Theorem}[section]
\newtheorem{lemma}[theorem]{\sc Lemma}
\newtheorem{proposition}[theorem]{\sc Proposition}
\newtheorem{corollary}[theorem]{\sc Corollary}
\begin{document}

\title[Engel-like conditions]{Orderable groups with Engel-like conditions}

\author{Pavel Shumyatsky }
\address{ Department of Mathematics, University of Brasilia,
Brasilia-DF, 70910-900 Brazil}
\email{pavel@unb.br}

\thanks{This research was supported by FAPDF and CNPq-Brazil}
\keywords{Orderable groups, polycyclic groups, Engel groups}
\subjclass[2010]{20F60,20F45}

\begin{abstract}
Let $x$ be an element of a group $G$. For a positive integer $n$ let $E_n(x)$ be the subgroup generated by all commutators  $[...[[y,x],x],\dots,x]$ over $y\in G$,  where $x$ is repeated $n$ times. There are several recent results showing that certain properties of groups with small subgroups $E_n(x)$ are close to those of Engel groups. The present article deals with orderable groups in which, for some $n\geq1$, the subgroups $E_n(x)$ are polycyclic. Let $h,n$ be positive integers and $G$ an orderable group in which $E_n(x)$ is polycyclic with Hirsch length at most $h$ for every $x\in G$. It is proved that there are $(h,n)$-bounded numbers $h^*$ and $c^*$ such that $G$ has a finitely generated normal nilpotent subgroup $N$ with $h(N)\leq h^*$ and $G/N$ nilpotent of class at most $c^*$.
\end{abstract}
\maketitle

\section{Introduction} A group $G$ is called an Engel group if for every $x,y\in G$ the equation $[y,x,x,\dots,x]=1$ holds, where $x$ is repeated in the commutator sufficiently many times depending on $x$ and $y$. Throughout the paper, we use the left-normed simple commutator notation $[a_1,a_2,a_3,\dots,a_r]=[...[[a_1,a_2],a_3],\dots,a_r]$. The long commutators $[y,x,\dots,x]$, where $x$ occurs $i$ times, are denoted by $[y,{}_i\,x]$. An element $x\in G$ is $n$-Engel if $[y,{}_n\,x]=1$ for all $y\in G$. A group $G$ is $n$-Engel if $[y,{}_n\,x]=1$ for all $x,y\in G$. Given $x\in G$, the subgroup $E_n(x)$ is the one generated by all elements of the form $[y,{}_n\,x]$ where $y$ ranges over $G$. Note that $E_n(x)$ is not the same as the more familiar subnormal subgroup $[G,{}_n\,x]=[[G,{}_{n-1}\,x],x]$. There are several recent results showing that certain properties of groups with small subgroups $E_n(x)$ are close to those of Engel groups (see for instance \cite{khushu1,khushu2,mona}). The present article deals with orderable groups. A group $G$ is called orderable if there exists a full order relation $\leq$ on the set $G$ such that $x\leq y$ implies $axb\leq ayb$ for all $a,b,x,y\in G$, i.e. the order on $G$ is compatible with the product of $G$. Kim and Rhemtulla proved that any orderable $n$-Engel group is nilpotent (\cite{kr2}, see also \cite{LM}). In the present article we consider orderable groups $G$ such that the subgroup $E_n(x)$ is polycyclic for each $x\in G$. Recall that a group is polycyclic if and only if it admits a finite subnormal series all of whose factors are cyclic. The Hirsch length $h(K)$ of a polycyclic group $K$ is the number of infinite factors in the subnormal series.

Our aim here is to prove the following theorem.

\begin{theorem}\label{main} Let $h,n$ be positive integers and $G$ an orderable group in which $E_n(x)$ is polycyclic with $h(E_n(x))\leq h$ for every $x\in G$. There are $(h,n)$-bounded numbers $h^*$ and $c^*$ such that $G$ has a finitely generated normal nilpotent subgroup $N$ with $h(N)\leq h^*$ and $G/N$ nilpotent of class at most $c^*$. 
\end{theorem}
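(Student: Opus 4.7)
I would set $T = \langle E_n(x) : x \in G\rangle$. The identity $[y,{}_n\,x]^g = [y^g,{}_n\,x^g]$ shows $E_n(x)^g = E_n(x^g)$, so $T$ is normal in $G$, and by construction $G/T$ is $n$-Engel. The plan is then to bound $h(T)$ in terms of $h$ and $n$, invoke the Kim--Rhemtulla theorem on $G/T$, and refine $T$ to the required nilpotent normal subgroup $N$.

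The main obstacle is showing that $T$ is polycyclic of Hirsch length bounded by some function of $h$ and $n$. Since each $E_n(x)$ is polycyclic with $h(E_n(x))\leq h$, it suffices to find elements $x_1,\dots,x_k\in G$, with $k=k(h,n)$, such that $T = E_n(x_1)\cdots E_n(x_k)$, whence $h(T)\leq kh$. I would argue by contradiction: if no bounded covering exists, one extracts an infinite strictly ascending chain $T_i = E_n(x_1)\cdots E_n(x_i)$ of polycyclic subgroups of $G$, and the task is to show that this chain cannot grow indefinitely. Here the orderability of $G$ should provide the necessary rigidity; relevant tools include local indicability, the fact that isolators of polycyclic subgroups in orderable groups are polycyclic of the same Hirsch length, and the structure of convex jumps. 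This is the step I expect to be the most delicate.

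Granted the bound on $h(T)$, the next task is to conclude that $G/T$ is nilpotent of class bounded by some $c(n)$. The obstacle here is that quotients of orderable groups need not be orderable, so Kim--Rhemtulla does not apply to $G/T$ automatically. I would try to verify that $T$ is relatively convex in $G$ (convex for some compatible order), making $G/T$ orderable and allowing direct application of Kim--Rhemtulla. Failing that, I would work locally: any finitely generated subgroup $H\leq G$ is itself orderable, the induced $E_n$-subgroups in $H$ are no larger than in $G$, and one can apply the $n$-Engel conclusion within each $H$, patching the local bounds using that nilpotency of a bounded class is a local property.

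Finally, for the nilpotent normal subgroup $N$ in the conclusion, I would take $N$ to be the Fitting subgroup of $T$, possibly enlarged inside the preimage of a suitable normal subgroup of $G/T$. Since $T$ is polycyclic of Hirsch length bounded by an $(h,n)$-function, standard polycyclic group theory provides a normal nilpotent subgroup $N\leq T$ of bounded Hirsch length whose quotient $T/N$ has controlled structure. Combining the $c(n)$-nilpotency of $G/T$ with the bounded-rank structure of $T/N$ yields $G/N$ nilpotent of class bounded by some $c^*=c^*(h,n)$. The chief technical difficulty throughout remains the bounded-covering claim for $T$.
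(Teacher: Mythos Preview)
The proposal has a genuine gap at the step you yourself flag as most delicate: the bounded-covering claim for $T=\langle E_n(x):x\in G\rangle$. The contradiction you sketch does not close. An infinite strictly ascending chain $T_1<T_2<\cdots$ of polycyclic subgroups, each of Hirsch length at most $h$, is not forbidden by orderability, local indicability, isolator behaviour, or convex-jump structure; those tools constrain individual subgroups, not the number of $E_n(x)$'s needed to generate $T$. Nothing you list produces an $(h,n)$-bound on how many are required, and without that bound the rest of the argument (including the construction of $N$ inside $T$) does not get off the ground. The orderability of $G/T$ is a second real obstacle, and your local workaround only relocates the same difficulty to finitely generated subgroups.

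The paper's route is quite different and never attempts to bound $h(T)$. It first proves that $G'$ is nilpotent of $(h,n)$-bounded class, using that convex subgroups are normal under the hypothesis, that all terms of the derived series are finitely generated, and Zelmanov's bound for torsion-free nilpotent $n$-Engel groups. It then fixes $a\in G$ for which $\gamma_f(\langle a,G'\rangle)$ has maximal Hirsch length and shows, by analysing products $ab$ with $b$ in the centraliser of this subgroup, that every such $b$ is $(n+h)$-Engel; hence the Fitting subgroup $F$ contains both $G'$ and this centraliser, and $F$ is nilpotent of bounded class by Zelmanov. The decisive new ingredient is a \emph{quantitative} Malcev theorem: a soluble group of automorphisms of a polycyclic group of Hirsch length $h$ has Hirsch length below $h^2+2h$. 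Applied to the action of $G$ on $\gamma_f(\langle a,G'\rangle)$, this bounds $h(G/F)$, so $G=\langle F,a_1,\dots,a_k\rangle$ with $k$ bounded, and one sets $N=\langle\gamma_{f_0}(\langle F,a_i\rangle):i\leq k\rangle$, each generator lying inside $E_n(a_i)$. The ``boundedly many $E_n$'s'' you were hoping for do appear at the end, but only after the quantitative Malcev step has supplied the bound on $k$.
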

Note that if $h=0$, the proof shows that $N=1$ and our result becomes the theorem of Kim and Rhemtulla.

One tool used in the proof of Theorem \ref{main} deserves a special mention. A well-known theorem of Malcev states that a soluble group of automorphisms of a polycyclic-by-finite group is polycyclic \cite{malcev}. We require the following quantitative variation of Malcev's theorem.
\medskip

{\em Let $N$ be a polycyclic-by-finite group with $h(N)=h$ and let $\Gamma$ be a soluble group of automorphisms of $N$. Then $h(\Gamma)<h^2+2h$. 
}\medskip

To our surprise, in the literature we did not find any mention of the fact that $h(\Gamma)$ should be bounded in terms of $h$ and so it seems that this has so far gone unnoticed. The author is grateful to Dan Segal for suggesting the proof given here (see Proposition \ref{polyc} in Section 3).

\section{Preliminaries}

We start with general facts about nilpotent groups and Engel elements. If $\alpha$ is an automorphism (or just an element) of a group $G$, the subgroup generated by the elements of the form $g^{-1}g^\alpha$ with $g\in G$ is denoted by $[G,\alpha]$. It is well-known that the subgroup $[G,\alpha]$ is an $\alpha$-invariant normal subgroup in $G$. Throughout, we write $[G,{}_i\,\alpha]$ for $[[G,{}_{i-1}\,\alpha],\alpha]$.

\begin{lemma}\label{aa} Let $G=H\langle a\rangle$ where $H$ is a nilpotent of class $c$ normal subgroup and $a$ is an $n$-Engel element. Then $G$ is nilpotent with class at most $cn$.
\end{lemma}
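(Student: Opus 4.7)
The plan is to argue by induction on the nilpotency class $c$ of $H$. The base case $c=1$, where $H$ is abelian, I would handle by computing the lower central series of $G$ directly: using the standard commutator expansions together with $[H,H]=1$ and the normality of $H$ in $G$, one checks by induction on $k$ that
\[
\gamma_{k+1}(G) = [H, {}_k\,a]
\]
for every $k\geq 1$. The base $\gamma_2(G)=[H,a]$ is immediate, and the inductive step uses that $[H,{}_k\,a]\leq H$ is abelian and normal in $G$, so that $[[H,{}_k\,a],G]$ collapses to $[[H,{}_k\,a],a]$. Since $H$ is abelian, $[H,{}_n\,a]$ coincides with the set $\{[h,{}_n\,a]:h\in H\}$, which is trivial by the $n$-Engel property of $a$; hence $\gamma_{n+1}(G)=1$ and the bound $cn$ is achieved.

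For the inductive step, assume $c\geq 2$ and the lemma for all smaller classes. Set $Z=\gamma_c(H)\leq Z(H)$. In the quotient $G/Z=(H/Z)\langle aZ\rangle$, the subgroup $H/Z$ has class $c-1$ and $aZ$ is still $n$-Engel, so the inductive hypothesis yields $\gamma_{(c-1)n+1}(G)\leq Z$. Hence
\[
\gamma_{cn+1}(G) = [\gamma_{(c-1)n+1}(G),{}_n\,G] \leq [Z,{}_n\,G],
\]
and it suffices to prove $[Z,{}_n\,G]=1$. For this, the key observation is that $[Z,H]=1$: for any $z\in Z$ and $g=ha^k\in G$ the identity $[z,g]=[z,a^k]\,[z,h]^{a^k}$ collapses to $[z,a^k]$, and the recursion $[z,a^{k+1}]=[z,a]\,[z,a^k]^a$ inside the abelian $a$-module $Z$ places $[z,a^k]$ inside $[Z,a]$. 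Thus $[Z,G]=[Z,a]$. Because $[Z,a]\leq Z\leq Z(H)$ and is $a$-invariant, the same reduction iterates to give $[Z,{}_k\,G]=[Z,{}_k\,a]$ for every $k\geq 1$, and since $Z$ is abelian, $[Z,{}_n\,a]=\{[z,{}_n\,a]:z\in Z\}=1$ by the $n$-Engel hypothesis.

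The step requiring the most care is the iterated reduction $[Z,{}_k\,G]=[Z,{}_k\,a]$: at each level one has to verify that the current subgroup $[Z,{}_{k-1}\,a]$ still lies in $Z(H)$ and remains $a$-invariant, so that the collapse from $G$-commutators to $a$-commutators can be repeated $n$ times. Once this bookkeeping is in place, the proof reduces to the abelian Engel computation in the final line, and induction on $c$ delivers the bound $cn$.
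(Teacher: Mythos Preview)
Your argument is correct and is essentially the same induction on $c$ as in the paper: the core step in both is that any $a$-invariant subgroup of $Z(H)$ satisfies $[K,{}_k\,G]=[K,{}_k\,a]$ and hence dies after $n$ steps. The only cosmetic difference is that the paper phrases this via the upper central series (showing $Z(H)\leq Z_n(G)$ and passing to $G/Z_n(G)$), while you phrase it via the lower central series (quotienting by $\gamma_c(H)$ and then killing $[\gamma_c(H),{}_n\,G]$); the two viewpoints are dual and yield the identical bound $cn$.
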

\begin{proof} Let $K=Z(H)$ and set $K_0=K$ and $K_{i+1}=[K_i,a]$ for $i=0,1,\dots$. Then $K_{n-1}\leq K\cap C_K(a)$ and so $K_{n-1}\leq Z(G)$. Moreover we observe that $[K_{i-1},G]\leq K_i$ and it follows that $K_{n-i}\leq Z_{i}(G)$ for $i=1,2,\dots,n$. Therefore $K\leq Z_{n}(G)$. Passing to the quotient $G/Z_{n}(G)$ and using induction on $c$ we deduce that $G$ is nilpotent with class at most $cn$.
\end{proof}

We write $\gamma_i(G)$ for the $i$th term of the lower central series of a group $G$.

\begin{lemma}\label{bb} For any positive integers $c,n$ there exists an integer $f=f(c,n)$ with the following property. Let $G=H\langle a\rangle$ where $H$ is a nilpotent of class $c$ normal subgroup. Then $\gamma_f(G)\leq E_n(a)$.
\end{lemma}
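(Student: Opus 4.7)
The plan is to argue by induction on $c$, the nilpotency class of $H$.

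For the base case $c=1$, $H$ is abelian. Viewing $H$ as a module over the group ring of $\langle a\rangle$, one computes directly that $\gamma_{k+1}(G)=[H,{}_k\,a]$ for all $k\geq 1$; abelianness of $H$ then gives $[H,{}_n\,a]=\langle[h,{}_n\,a]:h\in H\rangle\leq E_n(a)$. Hence $f(1,n)=n+1$ works.

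For the inductive step, set $Z=\gamma_c(H)$. This subgroup is abelian, contained in $Z(H)$, and normal in $G$. Applying the induction hypothesis to $G/Z$ (whose normal subgroup $H/Z$ has class $c-1$) yields $\gamma_m(G)\leq E_n(a)\cdot Z$ with $m=f(c-1,n)$. I would then absorb the $Z$-factor into $E_n(a)$ by iterating $[\,\cdot\,,G]$. The relevant observations are: since $H$ centralizes $Z$, the $G$-action on $Z$ factors through the cyclic group $G/H$, giving $[Z,{}_k\,G]=[Z,{}_k\,a]$ for every $k$; since $Z$ commutes with $E_n(a)\leq H$, one obtains the clean splitting $[E_n(a)Z,G]=[E_n(a),G]\cdot[Z,G]$; and $[Z,{}_n\,a]\leq E_n(a)$, because $Z$ is abelian and $[z,{}_n\,a]\in E_n(a)$ for each $z\in Z\subseteq G$. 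A straightforward induction on $k$ then yields
\[
\gamma_{m+k}(G)\leq[E_n(a),{}_k\,G]\cdot[Z,{}_k\,a],
\]
and at $k=n$ the second factor lies in $E_n(a)$.

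The main obstacle will be controlling the first factor $[E_n(a),{}_n\,G]$, since $E_n(a)$ is not obviously normal in $G$. To handle it, I would first establish the auxiliary containment $[H,{}_{cn}\,a]\leq E_n(a)$: each factor $H_i/H_{i+1}$ of the lower central series of $H$ is abelian, so the base-case computation applied to this factor gives $[H_i,{}_n\,a]\leq E_n(a)\cdot H_{i+1}$, and iterating $c$ times collapses the correction term. Since $[E_n(a),{}_k\,G]$ always sits in $H$ (because $E_n(a)\leq H$ and $H$ is normal in $G$), after sufficiently many further iterations it lands inside $[H,{}_{cn}\,a]\leq E_n(a)$. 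Combining these steps yields $\gamma_{f(c,n)}(G)\leq E_n(a)$ for a suitable $(c,n)$-bounded $f(c,n)$, completing the induction.
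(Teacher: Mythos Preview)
Your base case and the reduction $\gamma_m(G)\leq E_n(a)\,Z$ (with $Z=\gamma_c(H)$) are correct, and the splitting $\gamma_{m+k}(G)\leq [E_n(a),{}_k\,G]\cdot[Z,{}_k\,a]$ goes through since $Z\leq Z(H)$ centralizes everything in $H$. The difficulty you flag --- controlling $[E_n(a),{}_k\,G]$ --- is real, and your proposed resolution does not work as stated.

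The auxiliary containment $[H,{}_{cn}\,a]\leq E_n(a)$ is fine (your sketch for it can be made rigorous). The gap is in the sentence ``after sufficiently many further iterations it lands inside $[H,{}_{cn}\,a]$.'' Iterating $[\,\cdot\,,G]$ is not the same as iterating $[\,\cdot\,,a]$: for instance $[E_n(a),G]$ contains $[E_n(a),H]$, which can be a substantial piece of $H_2=[H,H]$ having nothing to do with iterated commutation by $a$. In fact one only gets $[E_n(a),G]\leq E_n(a)H_2$, and further applications of $[\,\cdot\,,G]$ merely give $[E_n(a),{}_k\,G]\leq E_n(a)H_2$ again, never pushing down to $E_n(a)H_{i}$ for larger $i$, let alone into $[H,{}_{cn}\,a]$. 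So the argument stalls precisely here, and no $(c,n)$-bounded number of $G$-commutations forces $[E_n(a),{}_k\,G]$ into $E_n(a)$ by this route.

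The paper handles this obstacle quite differently. After obtaining $\gamma_s(G)\leq Z\,E_n(a)$ with $Z=Z(H)$, it replaces $E_n(a)$ by the \emph{normal} subgroup $E=E_n(a)\cap Z\gamma_s(G)$, so that $ZE$ is normal and $\gamma_s(G)\leq ZE$. Then, rather than iterating $[\,\cdot\,,G]$, it climbs the upper central series of $G/ZE$: setting $G_i=Z_i\langle a\rangle$ with $Z_i/ZE=Z_i(G/ZE)$, it shows inductively that $\gamma_{k_i}(G_i)\leq E$ by observing that $a$ becomes a bounded Engel element modulo $\gamma_{k_{i-1}}(G_{i-1})$ and invoking Lemma~\ref{aa}. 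The crucial ingredient you are missing is exactly this use of Lemma~\ref{aa} to convert Engel information on $a$ into nilpotency of each $G_i$ over the previous stage; a purely commutator-calculus descent through the lower central series of $H$, as you attempt, does not by itself absorb the non-normality of $E_n(a)$.
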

\begin{proof} Fix $n\geq1$ and use induction on $c$. If $H$ is abelian, we obviously have $\gamma_{n+1}(G)\leq E_n(a)$ and so it is enough to choose $f=n+1$. Assume that $c\geq2$ and let $Z=Z(H)$. By induction there exists a bounded number $s$ such that $\gamma_s(G)\leq ZE_n(a)$. Let $E= E_n(a)\cap Z\gamma_s(G)$. So $ZE$ is normal in $G$ and $\gamma_s(G)\leq ZE$. Set $Z_0=ZE$ and for $i=0,1,\dots,s-1$ let $Z_i$ denote the full inverse image of $Z_i(G/Z_0)$. Further, for $i=0,1,\dots,s-1$ we set $G_i=Z_i\langle a\rangle$. It is clear that $Z_{s-1}=G_{s-1}=G$.

Since $Z$ is abelian, $[Z,{}_na]\leq E$. We observe that $Z$ and $E$ are commuting $a$-invariant subgroups and so $[Z_0,{}_na]\leq E$. Let $T$ be the normal closure of $[Z_0,{}_na]$ in $G_0$. It is clear that $T\leq E$. Since the image of $a$ in $G_0/T$ is $n$-Engel, Lemma \ref{aa} implies that there exists a bounded number $k$ such that $G_0/T$ is nilpotent with class at most $k-1$ and so $\gamma_k(G_0)\leq E$.

By induction on $i$ we will show that there exists a bounded number $k_i$ such that $\gamma_{k_i}(G_i)\leq E$. Once this is done, we will simply set $f=k_{s-1}$. Assume that for some $j\leq s-1$ there exists $k_j$ with the property that  $\gamma_{k_j}(G_j)\leq E$. If $j=s-1$ we have nothing to prove so we suppose that $j\leq s-2$. Since $G_{j+1}$ normalizes $G_j$, it follows that $\gamma_{k_j}(G_j)$ is normal in $G_{j+1}$. Recall that $\gamma_s(G)\leq G_0$. It follows that the image of $a$ in $G_{j+1}/\gamma_{k_j}(G_j)$ is $(s+k_j)$-Engel, whence by Lemma \ref{aa} the factor-group $G_{j+1}/\gamma_{k_j}(G_j)$ is nilpotent with bounded class, say $k_{j+1}$. We conclude that $\gamma_{k_{j+1}}(G_{j+1})\leq E$. This completes the proof.
\end{proof}

The next lemma is rather obvious so the proof is omitted.
\begin{lemma}\label{ni1} Let $G$ be a finitely generated nilpotent group and $H$ a subgroup of finite index in $G$. Then $H'$ has finite index in $G'$.
\end{lemma}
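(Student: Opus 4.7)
The plan is to induct on the nilpotency class $c$ of $G$. First, I would replace $H$ by its normal core $H_0=\bigcap_{g\in G}H^g$: this is contained in $H$, still has finite index in $G$, and satisfies $H_0'\leq H'$, so it suffices to prove the statement with $H$ replaced by $H_0$. Thus we may assume $H$ is normal in $G$ with index $m$, and in particular $g^m\in H$ for every $g\in G$. The base case $c\leq 1$ is immediate since then $G$ is abelian and both derived subgroups are trivial.

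For the inductive step $c\geq 2$, set $Z=\gamma_c(G)$, a finitely generated central abelian subgroup. The inductive hypothesis applied to the class-$(c-1)$ group $G/Z$ and its finite-index subgroup $HZ/Z$ gives that $H'Z/Z=(HZ/Z)'$ has finite index in $G'/Z=(G/Z)'$, so $G'/H'Z$ is finite. What remains is to show that $H'Z/H'\cong Z/(Z\cap H')$ is finite.

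For this final step I would exploit the fact that in a class-$c$ nilpotent group the simple $c$-fold commutator is multilinear into $\gamma_c(G)$: the correction terms appearing in identities such as $[g_1h_1,g_2,\dots,g_c]=[g_1,g_2,\dots,g_c]^{h_1}[h_1,g_2,\dots,g_c]$ all lie in $\gamma_{c+1}(G)=1$. Iterating yields $[g_1,\dots,g_c]^{m^c}=[g_1^m,\dots,g_c^m]\in\gamma_c(H)\leq H'$, using $c\geq 2$ in the last inclusion. Since such commutators generate the abelian group $Z$, this forces $Z^{m^c}\leq H'$, so $Z/(Z\cap H')$ is a finitely generated abelian group of exponent dividing $m^c$ and therefore finite, completing the argument. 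No step here presents a genuine obstacle; the only point requiring care is the multilinearity identity, which is routine once one notes that all error terms land in $\gamma_{c+1}(G)$.
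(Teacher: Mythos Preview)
Your argument is correct. The reduction to normal $H$ via the core, the induction on nilpotency class using $Z=\gamma_c(G)$, and the multilinearity of the $c$-fold commutator modulo $\gamma_{c+1}(G)=1$ to force $Z^{m^c}\leq\gamma_c(H)\leq H'$ are all sound; the only small point worth recording explicitly is that $Z$ is finitely generated because a finitely generated nilpotent group is polycyclic, which you use at the end.

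There is nothing to compare against: the paper declares the lemma ``rather obvious'' and omits the proof entirely. Your write-up is a perfectly standard way to fill in that omission.
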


\begin{lemma}\label{ni2} Let $G$ be a finitely generated nilpotent group and $\phi$ an automorphism of $G$ such that $[G,\phi]$ has finite index in $G$. Then $[G,\phi,\phi]$ has finite index in $G$ as well.
\end{lemma}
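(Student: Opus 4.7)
\emph{Proof plan.} My plan is to push the finite-index question down to the abelianization $A=G/G'$, where it becomes an elementary cokernel statement.

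The first step would be to establish the following reduction: for a finitely generated nilpotent group $H$, finiteness of $H/H'$ forces $H$ to be finite. The argument is routine — each quotient $\gamma_i(H)/\gamma_{i+1}(H)$ is a finitely generated abelian group which is a quotient of an $i$-fold tensor power of $H/H'$ and hence finite, and the lower central series terminates. Applied to $H=G/N$ for a normal subgroup $N$ of $G$, this yields the criterion I would then use twice: $N$ has finite index in $G$ if and only if $NG'/G'$ has finite index in $A$.

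Next I would identify the images of $[G,\phi]$ and $[G,\phi,\phi]$ in $A$. Let $\bar\phi$ be the automorphism of $A$ induced by $\phi$; every generator $g^{-1}g^\phi$ of $[G,\phi]$ maps to $(\bar\phi-1)\bar g$, so the image of $[G,\phi]$ in $A$ is exactly $(\bar\phi-1)A$. (Normality of $[G,\phi]$ in $G$, needed to form the quotient $G/[G,\phi]$, follows from viewing it as the commutator $[G,\langle\phi\rangle]$ inside $G\rtimes\langle\phi\rangle$.) The same calculation with $h\in[G,\phi]$ in place of $g\in G$ identifies the image of $[G,\phi,\phi]=[[G,\phi],\phi]$ in $A$ as $(\bar\phi-1)^2A$.

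The conclusion would then be a three-line matter: the criterion above translates the hypothesis into the statement that $\bar\phi-1$ has finite cokernel on $A$; a composition of two endomorphisms of a finitely generated abelian group each with finite cokernel again has finite cokernel, so $(\bar\phi-1)^2A$ has finite index in $A$; and the criterion read backwards now says $[G,\phi,\phi]$ has finite index in $G$. I do not foresee any real obstacle — the entire content of the proof is the observation that finite-index questions in a finitely generated nilpotent group can be detected on the abelianization, after which everything collapses to the trivial remark that the square of a finite-cokernel endomorphism again has finite cokernel.
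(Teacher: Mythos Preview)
Your approach is genuinely different from the paper's. The paper argues by induction on the nilpotency class: the abelian case is done directly, and for the inductive step one applies the result to $G/Z(G)$ and to $G/G'$, then uses Lemma~\ref{ni1} (finite-index subgroups have finite-index derived subgroups) to glue the two pieces. Your route is more conceptual---everything is pushed down to the abelianization in one stroke, and the nilpotent structure enters only through the single fact that a finitely generated nilpotent group with finite abelianization is finite. This is arguably cleaner and makes the underlying linear-algebra content (finite cokernel of $\bar\phi-1$) completely transparent.

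There is, however, one point that needs repair. You derive your criterion by applying ``finite abelianization $\Rightarrow$ finite'' to $H=G/N$, which presupposes that $N$ is normal; you then invoke it for $N=[G,\phi,\phi]$. But $[G,\phi,\phi]$ is \emph{not} normal in $G$ in general, even under the hypothesis that $[G,\phi]$ has finite index. A concrete example: take $G$ to be the integer Heisenberg group $\langle x,y\rangle$ with $z=[x,y]$ central, and $\phi$ the automorphism $x\mapsto x^{-1}$, $y\mapsto y^{-1}$ (so $\phi(z)=z$). One computes $[G,\phi]=\langle x^{2},y^{2},z^{2}\rangle$ (index $8$), and $[G,\phi,\phi]$ is generated by the elements $x^{-4a}y^{-4b}z^{-8ab}$; in particular every element of $[G,\phi,\phi]$ has $z$-exponent divisible by $8$. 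But $(x^{-4})^{y}=x^{-4}z^{-4}\notin[G,\phi,\phi]$, so $[G,\phi,\phi]$ is not normal.

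The fix is painless: the implication ``$NG'$ of finite index $\Rightarrow$ $N$ of finite index'' in fact holds for \emph{every} subgroup $N$ of a finitely generated nilpotent group $G$. One way to see this is by induction on the class $c$: the inductive hypothesis applied to $G/\gamma_c(G)$ gives $[G:N\gamma_c(G)]<\infty$, and then multilinearity of the bracket on $\gamma_c(G)/\gamma_{c+1}(G)$ shows that $\gamma_c(G)^{M}\leq N\cap\gamma_c(G)$ for a suitable power $M$, whence $[N\gamma_c(G):N]<\infty$. With this strengthening of your criterion in hand, your argument goes through as written.
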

\begin{proof} We use induction on the nilpotency class of $G$. Suppose first that $G$ is abelian and let $m$ be a positive integer such that $G^m\leq[G,\phi]$. Then $[G,\phi,\phi]$ contains $[G^m,\phi]=[G,\phi]^m$ which has finite index in $[G,\phi]$.

Now suppose that $G$ is non-abelian and both subgroups $[G,\phi,\phi]G'$ and $[G,\phi,\phi]Z(G)$ have finite index in $G$. By Lemma \ref{ni1},  $[G,\phi,\phi]'= ([G,\phi,\phi]Z(G))'$ has finite index in $G'$. This implies that $[G,\phi,\phi]$ has finite index in $[G,\phi,\phi]G'$, whence the lemma follows.
\end{proof}
\begin{corollary}\label{ni3}  Let $G=H\langle a\rangle$ be a nilpotent group with a normal torsion-free subgroup $H$ of Hirsch length $h$. Then $G$ is nilpotent with $h$-bounded class.
\end{corollary}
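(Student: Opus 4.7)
\emph{Plan.} The strategy is to show that $a$ acts on $G$ as a quantitatively bounded Engel element and then to invoke Lemma~\ref{aa}. Let $\phi$ denote conjugation by $a$ viewed as an automorphism of $H$, and set $H_0=H$, $H_{i+1}=[H_i,a]$. Using $[x,a]^a=[x^a,a]$ one sees by induction that each $H_i$ is $\phi$-invariant, so $\phi$ restricts to an automorphism of $H_i$ and $H_{i+1}=[H_i,\phi]$. Since $G$ is nilpotent the chain $H_0\geq H_1\geq\cdots$ terminates at $1$, and each $H_i$ is a finitely generated nilpotent group as a subgroup of the polycyclic group $G$.

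The crux is the claim that $h(H_{i+1})<h(H_i)$ whenever $H_i\neq 1$. Suppose instead that $H_{i+1}=[H_i,\phi]$ has finite index in $H_i$. Then Lemma~\ref{ni2} applied to $\phi\in\mathrm{Aut}(H_i)$ yields that $[H_i,\phi,\phi]=H_{i+2}$ also has finite index in $H_i$; iterating, every $H_j$ with $j\geq i$ has finite index in $H_i$. But $H_k=1$ for some $k$, forcing $H_i$ to be finite and hence trivial, because $H$ is torsion-free. This contradicts $H_i\neq 1$.

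Consequently the Hirsch lengths $h(H_0)>h(H_1)>\cdots$ drop strictly from $h$, so $H_h=[H,{}_ha]=1$. Since $G/H$ is cyclic we have $[G,a]\leq H$, whence $[G,{}_{h+1}a]\leq[H,{}_ha]=1$, and so $a$ is an $(h{+}1)$-Engel element of $G$. Moreover, the successive factors of the upper central series of the torsion-free nilpotent group $H$ are nontrivial torsion-free abelian, each contributing at least $1$ to $h(H)=h$; therefore $H$ has nilpotency class at most $h$. Lemma~\ref{aa} now yields that $G$ is nilpotent of class at most $h(h+1)$, which is $h$-bounded.

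The main obstacle is the strict Hirsch-length descent in the middle step; once this is secured by the iterated use of Lemma~\ref{ni2} inside each $H_i$, the conclusion is a direct appeal to Lemma~\ref{aa} together with the elementary bound on the class of a torsion-free nilpotent group in terms of its Hirsch length.
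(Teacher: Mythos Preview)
Your argument is correct and follows essentially the same route as the paper: bound the class of $H$ in terms of $h$, use Lemma~\ref{ni2} iteratively to force the Hirsch lengths $h([H,{}_i\,a])$ to drop strictly (until the subgroup becomes trivial by torsion-freeness), conclude that $a$ is Engel of $h$-bounded degree, and finish with Lemma~\ref{aa}. The only cosmetic difference is that the paper observes directly that $[G,a]=[H,a]$, so $a$ is $h$-Engel rather than $(h{+}1)$-Engel, yielding a marginally sharper explicit bound; the overall strategy is identical.
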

\begin{proof} We assume that $h\geq1$. It is clear that $H$ has nilpotency class at most $h-1$. In view of Lemma \ref{aa} we need to show that $a$ is $n$-Engel for some $h$-bounded number $n$. Lemma \ref{ni2} implies that whenever $[H,{}_i\,a]$ is infinite the subgroup $[H,{}_{i+1}\,a]$ has infinite index in $[H,{}_i\,a]$. Therefore whenever $[H,{}_i\,a]$ is infinite, $h([H,{}_{i+1}\,a])<h([H,{}_i\,a])$. Hence, $a$ is $n$-Engel with $n\leq h$.
\end{proof}

Given subgroups $X$ and $Y$ of a group $G$, we denote by $X^Y$ the smallest subgroup of $G$ containing $X$ and normalized by $Y$. We say that a group $G$ satisfies $max$ if $G$ satisfies the maximal condition on subgroups.

\begin{lemma}\label{x^y} Let $x$ and $y$ be elements of a group $G$ and suppose that for some $n\geq 1$ the subgroup $E_n(y)$ satisfies $max$. Then $\langle x\rangle^{\langle y\rangle}$ is finitely generated.
\end{lemma}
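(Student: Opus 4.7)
My plan is to introduce the descending chain of $y$-invariant subgroups $W_j=\langle[x,{}_j\,y]\rangle^{\langle y\rangle}$ for $j=0,1,\dots,n$, where $W_0=\langle x\rangle^{\langle y\rangle}$ is what we want to show is finitely generated, and $W_n$ will sit naturally inside $E_n(y)$. Writing $w_j=[x,{}_j\,y]$ so that $w_0=x$ and $w_j=w_{j-1}^{-1}w_{j-1}^{y}$, I will prove by downward induction on $j$ that each $W_j$ is finitely generated.

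The base case $j=n$ follows immediately from the hypothesis: the identity $w_n^{y^i}=[x,{}_n\,y]^{y^i}=[x^{y^i},{}_n\,y]$ shows $W_n\leq E_n(y)$, and since $E_n(y)$ satisfies $\max$, every subgroup of it is finitely generated. For the inductive step, the crux is the identity $w_{j-1}^{y}=w_{j-1}w_j$, which comes straight from the definition of $w_j$. Iterating it yields the telescoping formula $w_{j-1}^{y^k}=w_{j-1}\cdot w_j w_j^y\cdots w_j^{y^{k-1}}$ for $k\geq 1$, with a mirror formula for negative $k$ obtained by rearranging $w_{j-1}=w_{j-1}^{y^{-1}}w_j^{y^{-1}}$. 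Both formulas express an arbitrary $y$-conjugate of $w_{j-1}$ as a product of $w_{j-1}$ and $y$-conjugates of $w_j$, giving $W_{j-1}=\langle w_{j-1},W_j\rangle$, which is finitely generated as soon as $W_j$ is.

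Running this reduction from $j=n$ down to $j=0$ adds a single generator $w_{j-1}$ at each stage, producing a finite generating set for $W_0=\langle x\rangle^{\langle y\rangle}$. I do not anticipate a serious obstacle; the entire argument boils down to organizing the telescoping based on the identity $w_{j-1}^{y}=w_{j-1}w_j$ together with the inclusion $W_n\leq E_n(y)$. The main conceptual step, if any, is to realize that one should work simultaneously with the whole chain of commutators $w_0,w_1,\dots,w_n$ rather than trying to induct on $n$ directly through $E_{n-1}(y)$, which is not known to satisfy $\max$.
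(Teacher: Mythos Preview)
Your argument is correct and is essentially the paper's proof written out in more detail: the paper simply asserts that $\langle x\rangle^{\langle y\rangle}$ is generated by the elements $[x,{}_i\,y]$ for $i\geq 0$ (your telescoping identities prove precisely this), and then observes that the tail from $i=n$ onward lies in $E_n(y)$, giving $\langle x\rangle^{\langle y\rangle}=\langle x,[x,y],\dots,[x,{}_{n-1}y],X\rangle$ with $X=\langle x\rangle^{\langle y\rangle}\cap E_n(y)$ finitely generated. Your chain $W_0\geq W_1\geq\cdots\geq W_n$ is just a more explicit packaging of the same reduction, yielding the same generating set with $W_n$ in place of $X$.
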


\begin{proof} Observe that ${\langle x\rangle}^{\langle y\rangle}$ is generated by all commutators $[x,{}_iy]$ with $i=0,1,\dots$. Set $X=\langle x\rangle^{\langle y\rangle}\cap E_n(y)$. We have $${\langle x\rangle}^{\langle y\rangle}=\langle x,[x,y],\dots,[x,{}_{n-1}y],X\rangle.$$ Since  $E_n(y)$ satisfies $max$, $X$ is finitely generated and so the lemma follows.
\end{proof}

\begin{corollary}\label{H^y}
Let $y$ be an element of a group $G$ and $H$ a finitely generated subgroup. Suppose that for some $n\geq 1$ the subgroup $E_n(y)$ satisfies $max$. Then $H^{\langle y\rangle}$ is finitely generated.
\end{corollary}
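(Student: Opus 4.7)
The plan is to reduce the statement directly to Lemma \ref{x^y} by applying it to each of finitely many generators of $H$. Since $H$ is finitely generated, I fix a finite generating set $x_1,\dots,x_k$ so that $H=\langle x_1,\dots,x_k\rangle$. The normal closure $H^{\langle y\rangle}$ is by definition the smallest subgroup of $G$ containing $H$ and normalized by $\langle y\rangle$, so it is generated by all conjugates $x_j^{y^i}$ with $1\leq j\leq k$ and $i\in\mathbb{Z}$.

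This rewrites as $H^{\langle y\rangle}=\bigl\langle \langle x_1\rangle^{\langle y\rangle},\dots,\langle x_k\rangle^{\langle y\rangle}\bigr\rangle$. By Lemma \ref{x^y}, each factor $\langle x_j\rangle^{\langle y\rangle}$ is finitely generated; choosing a finite generating set $S_j$ for the $j$th one, the finite union $S_1\cup\cdots\cup S_k$ generates $H^{\langle y\rangle}$. There is essentially no obstacle here: the hypothesis on $E_n(y)$ has already been absorbed into Lemma \ref{x^y}, and the passage from a single element to a finitely generated subgroup is simply the observation that a subgroup generated by finitely many finitely generated subgroups is itself finitely generated.
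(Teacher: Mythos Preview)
Your argument is correct and is exactly the approach the paper intends: the corollary is stated without proof precisely because applying Lemma~\ref{x^y} to each generator of $H$ and noting that $H^{\langle y\rangle}=\langle\,\langle x_1\rangle^{\langle y\rangle},\dots,\langle x_k\rangle^{\langle y\rangle}\,\rangle$ is immediate.
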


The following lemma is well-known. We supply the proof for the reader's convenience.

\begin{lemma}\label{comm}
If $G$ is a group generated by two elements $x$ and $y$, then $G'=\langle [x,y]^{x^ry^s}\,|\,r,s\in\mathbb{Z}\rangle$.
\end{lemma}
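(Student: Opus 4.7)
Let $N=\langle [x,y]^{x^r y^s}\mid r,s\in\mathbb{Z}\rangle$. The inclusion $N\leq G'$ is immediate. My plan for the reverse inclusion is to show that $N$ is normal in $G$; once this is known, the fact that $[x,y]\in N$ forces the images of $x$ and $y$ to commute modulo $N$, so $G/N$ is abelian and hence $G'\leq N$.

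Since $G=\langle x,y\rangle$, to prove normality of $N$ it suffices to check that $N$ is invariant under conjugation by each of $x^{\pm1}$ and $y^{\pm1}$. The cases of $y^{\pm1}$ are trivial: the conjugate of $[x,y]^{x^ry^s}$ by $y^{\pm1}$ is exactly $[x,y]^{x^r y^{s\pm 1}}$, another generator of $N$.

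The nontrivial step, and the main obstacle, is conjugation by $x$ (and analogously by $x^{-1}$). Using the standard identity $ab=ba[a,b]$, I would write $y^s x=xy^s[y^s,x]$, so that
\[
\bigl([x,y]^{x^r y^s}\bigr)^x=[x,y]^{x^{r+1}y^s[y^s,x]}=\bigl([x,y]^{x^{r+1}y^s}\bigr)^{[y^s,x]}.
\]
The first factor is a generator of $N$. Since $N$ is a subgroup and is therefore closed under conjugation by its own elements, it is enough to verify that the commutator $[y^s,x]$ itself lies in $N$. Here I would invoke the identity $[ab,c]=[a,c]^b[b,c]$ iteratively to expand $[y^s,x]$ (for both $s>0$ and $s<0$) as a product of conjugates of $[y,x]=[x,y]^{-1}$ by powers of $y$; each such conjugate $[x,y]^{y^i}$ is a generator of $N$ (taking $r=0$, $s=i$), so the whole product lies in $N$.

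The treatment for conjugation by $x^{-1}$ is symmetric: the identity $y^s x^{-1}=x^{-1}y^s[y^s,x^{-1}]$ reduces the problem to showing $[y^s,x^{-1}]\in N$, and since $[y,x^{-1}]=[x,y]^{x^{-1}}$ is itself one of the defining generators of $N$ (with $r=-1$, $s=0$), the same expansion trick places $[y^s,x^{-1}]$ in $N$. This establishes normality of $N$ and completes the argument. The subtlety worth highlighting is the bootstrapping feature: we cannot appeal to the yet-to-be-proved equality $G'=N$, so we must verify by hand that the \emph{specific} commutators $[y^s,x^{\pm1}]$ produced by moving an $x$ across $y^s$ are already expressible in terms of the prescribed generators of $N$.
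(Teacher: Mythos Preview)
Your proof is correct and follows essentially the same route as the paper's: define $N$, verify invariance under $y^{\pm1}$ trivially, then for $x$ rewrite $[x,y]^{x^r y^s x}=[x,y]^{x^{r+1}y^s[y^s,x]}$ and show $[y^s,x]\in N$ by expanding it as a product of $y$-conjugates of $[y,x]$, with the $x^{-1}$ case handled analogously. Your write-up is in fact a bit more explicit than the paper's about the $s<0$ case and the $x^{-1}$ case, but the argument is the same.
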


\begin{proof}
Let $N=\langle [x,y]^{x^ry^s}\,|\,r,s\in\mathbb{Z} \rangle$. Of course, $N^y$ and $N^{y^{-1}}$ are both contained in $N$. Moreover,
$$[x,y]^{x^r y^s x}=[x,y]^{x^{r+1}y^s [y^s,x]}=[y^s,x]^{-1}[x,y]^{x^{r+1}y^s}[y^s,x].$$
We have $[y^s,x]=[y,x]^{y^{s-1}}[y,x]^{y^{s-2}}\cdots[y,x]$, for all $s\geq 1$. This implies that $N^x\leq N$. Similarly we get $N^{x^{-1}}\leq N$ and so $N$ is normal in $G$. It follows that $G'=N$, as desired.
\end{proof}

\begin{lemma}\label{BM} Let $n\geq 1$ and $G$ be a group generated by a finite set $Y$ such that $E_n(y)$ satisfies $max$ for all $y\in Y$. Then $G'$ is finitely generated.
\end{lemma}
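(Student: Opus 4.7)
The plan is to build $G'$ as an increasing chain of finitely generated subgroups by iteratively applying Corollary~\ref{H^y}. Write $Y=\{y_1,\dots,y_d\}$ and set $N_0=\langle [y_i,y_j]\mid 1\le i<j\le d\rangle$; this is finitely generated, and since $G/N_0^G$ is abelian we have $G'=N_0^G$. Inductively define $N_k=N_{k-1}^{\langle y_k\rangle}$ for $k=1,\dots,d$. Applying Corollary~\ref{H^y} with $y=y_k$ to the finitely generated subgroup $N_{k-1}$ shows that each $N_k$ is finitely generated; in particular so is $N_d$.

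The central claim, proved by induction on $k$, is that $N_k$ is invariant under conjugation by $y_i$ for every $i\le k$. Granting this, $N_d$ is invariant under all the generators of $G$, hence normal in $G$; since it contains every commutator $[y_i,y_j]$ it must coincide with $G'=N_0^G$, finishing the proof.

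The case $k=1$ is immediate. For the inductive step, $N_k$ is $y_k$-invariant by definition, so fix $i<k$ and a typical generator $n^{y_k^j}$ of $N_k$, with $n\in N_{k-1}$ and $j\in\mathbb{Z}$. The identity $y_k^j y_i=y_i y_k^j [y_k^j,y_i]$ lets us rewrite
$$(n^{y_k^j})^{y_i}=n^{y_k^j y_i}=\bigl((n^{y_i})^{y_k^j}\bigr)^{[y_k^j,y_i]}.$$
The inductive hypothesis gives $n^{y_i}\in N_{k-1}$, so $(n^{y_i})^{y_k^j}\in N_k$. The commutator formula $[ab,c]=[a,c]^b[b,c]$, together with its counterpart for inverses, expresses $[y_k^j,y_i]$ as a product of conjugates $[y_k,y_i]^{y_k^l}$; each such conjugate lies in $N_k$ because $[y_k,y_i]\in N_0\subseteq N_{k-1}$ and $N_k=N_{k-1}^{\langle y_k\rangle}$. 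Thus $[y_k^j,y_i]\in N_k$, and conjugation by it preserves the subgroup $N_k$, placing $(n^{y_k^j})^{y_i}$ in $N_k$.

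The main obstacle is arranging the iteration so that the $y_i$-invariance gained at stage $i$ is not destroyed at later stages $k>i$; the commutator identity above is precisely what absorbs the discrepancy $[y_k^j,y_i]$ back into the subgroup $N_k$ that has just been constructed, and it is this observation that makes the single pass $k=1,2,\dots,d$ suffice to produce a normal subgroup.
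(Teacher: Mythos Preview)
Your approach is sound and genuinely different from the paper's. The paper proceeds by induction on $|Y|$: for two generators it invokes Lemma~\ref{comm} to write $G'=(\langle[x,y]\rangle^{\langle x\rangle})^{\langle y\rangle}$ and applies Corollary~\ref{H^y} twice; for $d\ge 3$ it uses that each $(d{-}1)$-generated subgroup $G_i$ already has $G_i'$ finitely generated, closes each $G_i'$ under $\langle y_i\rangle$, and observes that the subgroup generated by all of these is normal and equals $G'$. Your single-pass construction $N_0\subseteq N_1\subseteq\dots\subseteq N_d$ avoids both the induction on $d$ and the separate two-generator lemma, at the price of a more hands-on invariance check. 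Each approach uses Corollary~\ref{H^y} exactly $d$ times.

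One point, however, needs attention. ``Invariant under conjugation by $y_i$'' must mean $N_k^{y_i}=N_k$, not merely $N_k^{y_i}\subseteq N_k$, since in the end you need $N_d$ to be normal. Your displayed computation only gives the inclusion $N_k^{y_i}\subseteq N_k$; running it with $y_i^{-1}$ in place of $y_i$ requires knowing that $[y_k^j,y_i^{-1}]\in N_k$, and the identity $[y_k^j,y_i^{-1}]=([y_k^j,y_i]^{-1})^{y_i^{-1}}$ is circular here. The fix is short: since $[y_k,y_i]\in N_0\subseteq N_{k-1}$ and $N_{k-1}$ is $y_i^{-1}$-invariant by the inductive hypothesis, one has
\[
[y_k,y_i^{-1}]=\bigl([y_k,y_i]^{-1}\bigr)^{y_i^{-1}}\in N_{k-1}\subseteq N_k,
\]
and then $[y_k^j,y_i^{-1}]$ is a product of $\langle y_k\rangle$-conjugates of $[y_k,y_i^{-1}]^{\pm1}$, hence lies in $N_k$. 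With this addition the $y_i^{-1}$-case goes through exactly as the $y_i$-case, and your proof is complete.
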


\begin{proof} First assume that $Y=\{x,y\}$. Then $G'=\langle [x,y]^{x^ry^s}\,|\,r,s\in\mathbb{Z}\rangle \rangle$ by Lemma \ref{comm} and we are done since $(\langle [x,y]\rangle^{\langle x\rangle})^{\langle y\rangle}$ is finitely generated by Corollary \ref{H^y}. Now suppose that $Y=\{y_1,\dots,y_{d}\}$ with $d\geq 3$, and assume that the result is true for subgroups which can be generated by at most $d-1$ elements from $Y$. For $i=1,\dots,d$ set $G_i=\langle y_1,\dots,y_{i-1},y_{i+1},\dots,y_{d}\rangle$.
The induction hypothesis yields that $G_i'$ is finitely generated and, by Corollary \ref{H^y}, the same is true for $(G_i')^{\langle y_i\rangle}$. It is easy to see that $K=\langle(G_i')^{\langle y_i\rangle}\,|\,i=1,\dots,d\rangle$ is a normal subgroup of $G$ and hence $G'=K$. In particular, $G'$ is finitely generated.
\end{proof}

Now, an easy induction gives us the following corollary.

\begin{corollary}\label{G^s} Let $G$ be a finitely generated group such that for each $g\in G$ there exists $n\geq1$ with the property that $E_n(g)$ satisfies $max$ . Then each term of the derived series of $G$ is finitely generated.
\end{corollary}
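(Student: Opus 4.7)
The plan is to prove Corollary \ref{G^s} by induction on the index $k$ in the derived series $G=G^{(0)}\geq G^{(1)}\geq\dots$, using Lemma \ref{BM} at each step. The base case $k=0$ is just the assumption that $G$ is finitely generated. For the inductive step, I assume $G^{(k)}$ is finitely generated and aim to show that $G^{(k+1)}=(G^{(k)})'$ is finitely generated.

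To apply Lemma \ref{BM} to $G^{(k)}$ in place of $G$, I fix a finite generating set $Y$ of $G^{(k)}$ and need, for each $y\in Y$, a number $n$ such that the analogue of $E_n(y)$ computed inside $G^{(k)}$, namely $\langle [z,{}_n\,y]\mid z\in G^{(k)}\rangle$, satisfies $max$. Since $y\in G^{(k)}\subseteq G$, the hypothesis on $G$ provides an integer $n=n(y)$ for which $E_n(y)=\langle [z,{}_n\,y]\mid z\in G\rangle$ satisfies $max$ as a subgroup of $G$. The restricted subgroup $\langle [z,{}_n\,y]\mid z\in G^{(k)}\rangle$ is contained in $E_n(y)$, and any subgroup of a group satisfying $max$ again satisfies $max$; hence the required condition holds for every $y\in Y$.

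Applying Lemma \ref{BM} inside $G^{(k)}$ with generating set $Y$ now yields that $(G^{(k)})'=G^{(k+1)}$ is finitely generated, which completes the induction. I do not foresee a substantive obstacle here: the only real content is the observation that the $max$ hypothesis is inherited by subgroups, which lets the property ``for each element $g$, some $E_n(g)$ satisfies $max$'' pass from $G$ down to every term of the derived series that we have already shown to be finitely generated.
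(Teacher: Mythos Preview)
Your argument is correct and matches the paper's intended approach: the paper simply says ``an easy induction gives us the following corollary'' without spelling out the details, and what you wrote is precisely that induction, applying Lemma~\ref{BM} to each successive $G^{(k)}$. One small cosmetic point: Lemma~\ref{BM} as stated uses a single $n$ for all $y\in Y$, whereas you allow $n=n(y)$; since $Y$ is finite you may take $n=\max_{y\in Y} n(y)$ and use $E_n(y)\subseteq E_{n(y)}(y)$, so this is harmless.
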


We will also require the following lemma.
\begin{lemma}\label{nilinili} Let $G$ be a group such that $G'$ is nilpotent and let $N$ be a normal subgroup of $G$. Suppose that the elements $x,y\in G$ are both Engel in the subgroups $N\langle x\rangle$ and $N\langle y\rangle$, respectively. Then their product $xy$ is Engel in the subgroup $N\langle xy\rangle$.
\end{lemma}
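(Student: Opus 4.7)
The plan is to reduce to a situation where $x$ and $y$ commute as automorphisms of an abelian group and then exploit a binomial expansion of $(xy-1)^k$. Every element of $N\langle xy\rangle$ has the form $v(xy)^k$ with $v\in N$, and because $(xy)^k$ centralizes $xy$ the standard identity $[ab,c]=[a,c]^b[b,c]$ gives, by a trivial induction on $m$, that $[v(xy)^k,{}_m(xy)]=[v,{}_m(xy)]^{(xy)^k}$ for every $m\geq1$. Hence it suffices to show that for every $v\in N$ there exists $m$ with $[v,{}_m(xy)]=1$.

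I would then argue by induction on the nilpotency class $c$ of $G'$. The case $c=0$ is vacuous since then $G$ is abelian. For the inductive step, set $Z=\gamma_c(G')$, which is central in $G'$, hence abelian, and normal in $G$. Passing to $G/Z$, the derived subgroup has class $c-1$, and the Engel hypotheses on $x,y$ obviously descend. Applying the inductive hypothesis to $G/Z$ and $NZ/Z$ yields an integer $m$ with $[v,{}_m(xy)]\in Z$; since $N$ is normal in $G$ the same iterated commutator already lies in $N$, so $u:=[v,{}_m(xy)]\in N\cap Z$.

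To finish the inductive step, I use that $Z$, being central in $G'$, is acted upon by $G$ through the abelian quotient $G/G'$. Thus $x$ and $y$ act on $N\cap Z$ as commuting automorphisms; writing $Z$ additively, $[z,{}_k g]=(g-1)^k z$. The Engel hypotheses applied to $u\in N\subseteq N\langle x\rangle\cap N\langle y\rangle$ give integers $a,b$ with $(x-1)^a u=(y-1)^b u=0$. Because $xy=yx$ on $Z$, I can write $(xy-1)=x(y-1)+(x-1)$, where both summands commute, and expand $(xy-1)^{a+b-1}$ binomially: every term contains either a factor $(y-1)^i$ with $i\geq b$ or a factor $(x-1)^{a+b-1-i}$ with $a+b-1-i\geq a$, so every term annihilates $u$. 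Therefore $[u,{}_{a+b-1}(xy)]=1$, and combining with the inductive step gives $[v,{}_{m+a+b-1}(xy)]=1$.

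The main obstacle I foresee is conceptual rather than technical: the Engel exponents furnished by the hypotheses on $x$ and on $y$ cannot be combined directly, and the assumption that $G'$ is nilpotent is used precisely to produce, via the descending central series of $G'$, successive central abelian subquotients on which the conjugation actions of $x$ and $y$ commute. The binomial identity for $(xy-1)^{a+b-1}$ then performs the combinatorial work of fusing the two exponents into a single Engel exponent for $xy$.
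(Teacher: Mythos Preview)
Your proof is correct. Both your argument and the paper's proceed by induction exploiting the nilpotency of $G'$ to reduce to a situation where $x$ and $y$ act on an abelian section through the abelian quotient $G/G'$, hence commute as operators. The differences lie in the choice of filtration and in the endgame. The paper filters $N$ by the ascending series $C_{i+1}/C_i=C_{N/C_i}(G'C_i/C_i)$ and, once reduced to the case $[N,G']=1$, passes to the semidirect product $N\rtimes(G/C_G(N))$ and invokes Fitting's theorem (a product of normal nilpotent subgroups is nilpotent) to conclude. You instead descend along the lower central series of $G'$, landing in $N\cap\gamma_c(G')$, and finish with the explicit binomial identity $(xy-1)^{a+b-1}=\sum_i\binom{a+b-1}{i}x^i(y-1)^i(x-1)^{a+b-1-i}$, using commutativity of the operators to place the annihilating factor $(x-1)^a$ or $(y-1)^b$ adjacent to $u$. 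Your route is more elementary---it avoids the appeal to Fitting's theorem---and it has the minor advantage that it only uses the pointwise Engel hypothesis on $x$ and $y$, whereas the paper's Fitting step tacitly needs $N\langle\bar x\rangle$ and $N\langle\bar y\rangle$ to be genuinely nilpotent rather than merely having $\bar x,\bar y$ as Engel elements. The paper's approach, on the other hand, is more structural and would more readily yield a uniform Engel bound for $xy$ when uniform bounds for $x$ and $y$ are available.
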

\begin{proof} Set $C_0=1$ and $C_{i+1}/C_i=C_{N/C_i}(G'C_i/C_i)$ for $i=0,1,\dots$. Thus, $C_0\leq C_1\leq\dots$ is a series in $N$ all of whose factors  centralize $G'$. Since $G'$ is nilpotent, there is a number $s$ such that $C_s=N$. If $s=0$, then $N$ is trivial and there is nothing to prove. So we assume that $s\geq1$. Arguing by induction on $s$ assume that the lemma holds for the group $G/C_1$. Thus, there is a number $j$ such that $[g,{}_j\,xy]\leq C_1$ for each $g\in N$. Therefore it is sufficient to prove the lemma with $C_1$ in place of $N$. Hence, without loss of generality we assume that $[N,G']=1$. Let ${\bar G}=G/C_G(N)$. The group ${\bar G}$ naturally acts on $N$ and we can view $N$ as a subgroup in the semidirect product of $N$ by ${\bar G}$. We note that $N\langle xy\rangle$ is nilpotent if and only if so is $N\langle{\bar x}{\bar y}\rangle$. Since ${\bar G}$ is abelian, both subgroups $N\langle{\bar x}\rangle$ and $N\langle{\bar y}\rangle$ are normal in $N{\bar G}$. Moreover both are nilpotent. It follows that their product is nilpotent, too. The lemma follows.
\end{proof}

\section{On soluble groups of automorphisms of polycyclic groups}

Malcev proved that if $\Gamma$ is a soluble group of automorphisms of a polycyclic-by-finite group, then $\Gamma$ is polycyclic \cite{malcev}. In fact a more specific information about $\Gamma$ can be deduced. The aim of this short section is to prove the following proposition. The proof given here was suggested by Dan Segal.

\begin{proposition}\label{polyc} Let $N$ be a polycyclic-by-finite group with $h(N)=h$ and let $\Gamma$ be a soluble group of automorphisms of $N$. Then $h(\Gamma)<h^2+2h$. 
\end{proposition}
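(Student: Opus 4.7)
The plan is to argue by induction on $h$. When $N$ admits a proper characteristic abelian subgroup of strictly intermediate Hirsch length, the inductive step reduces $\Gamma$ to two groups of smaller rank plus a stabiliser term; when it does not, $N$ is abelian-by-finite and the problem reduces to a Malcev-type Hirsch-length bound for soluble subgroups of $\mathrm{GL}(h,\mathbb{Z})$ together with a cohomological piece.

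\emph{Inductive step.} Take $M=Z(F(N))$, the centre of the Fitting subgroup. For polycyclic-by-finite $N$ with $h\ge1$ this is a characteristic abelian subgroup of positive Hirsch length, and unless $N$ is abelian-by-finite we have $1\le r:=h(M)<h$. Let $C=\ker(\Gamma\to\mathrm{Aut}(M))$ and $K=C\cap\ker(\Gamma\to\mathrm{Aut}(N/M))$. Any $\phi\in K$ has the form $\phi(n)=n\cdot c(\bar n)$ with $c\colon N/M\to M$ a crossed homomorphism, so $K$ embeds in the finitely generated abelian group $Z^1(N/M,M)$, whose $\mathbb{Z}$-rank is at most $r(h-r)$; hence $h(K)\le r(h-r)$. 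The inductive hypothesis applied to $\Gamma/C$ acting on $M$ and $C/K$ acting on $N/M$ gives $h(\Gamma/C)<r^2+2r$ and $h(C/K)<(h-r)^2+2(h-r)$, so
\[
h(\Gamma)<r^2+2r+(h-r)^2+2(h-r)+r(h-r)=r^2-rh+h^2+2h\le h^2+h+1<h^2+2h,
\]
the last inequality holding for every $r\in\{1,\dots,h-1\}$ as soon as $h\ge2$.

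\emph{Base case.} When $N$ is abelian-by-finite, pick a characteristic torsion-free abelian subgroup $A\cong\mathbb{Z}^h$ of finite index. The action $\rho\colon\Gamma\to\mathrm{GL}(h,\mathbb{Z})$ has image a polycyclic (indeed soluble) subgroup, whose Hirsch length is at most $h^2$: any polycyclic subgroup of $\mathrm{GL}(h,\mathbb{Z})$ is a lattice in its Zariski closure, a connected algebraic subgroup of $\mathrm{GL}_h$ of dimension at most $h^2$. The kernel $\ker\rho$ maps with finite image into $\mathrm{Aut}(N/A)$, and its further kernel is $Z^1(N/A,A)$; since $N/A$ is finite, Maschke's theorem gives $\mathbb{Q}\otimes H^1(N/A,A)=0$ and hence $Z^1(N/A,A)$ has $\mathbb{Z}$-rank at most $h$. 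Therefore $h(\Gamma)\le h^2+h<h^2+2h$, and the case $h=1$ is trivial because $\mathrm{Aut}(N)$ is then finite.

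\emph{Main obstacle.} The genuinely technical ingredient is the Hirsch-length bound $h^2$ for polycyclic subgroups of $\mathrm{GL}(h,\mathbb{Z})$ (via the Zariski closure); once this is in hand, the cohomological estimate in the base case uses only the finiteness of $N/A$, and the inductive step is straightforward arithmetic. One subtle point to handle carefully is verifying that $Z(F(N))$ has positive Hirsch length whenever $h\ge1$ (which uses the fact that $C_N(F(N))\le F(N)$ for polycyclic $N$, so a finite Fitting subgroup forces $N$ itself to be finite).
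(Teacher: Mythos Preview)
Your organisation is genuinely different from the paper's. You induct on $h$ via the characteristic subgroup $M=Z(F(N))$ and defer the hard work to an abelian-by-finite base case; the paper instead builds a plinth series $1<N_1<\dots<N_s$ inside $N$, applies the Dirichlet Unit Theorem to bound the Hirsch length of the action on the bottom plinth by $h_1-1$, handles the stabiliser by derivations (as you do), and controls the residual centraliser $\Delta=C_K(N_s)$ via an elementary lemma showing $\Delta^{m!}\le\mathrm{Inn}(N)$.

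The genuine gap in your argument is the base case. The assertion that a polycyclic subgroup $P\le\mathrm{GL}(h,\mathbb{Z})$ is ``a lattice in its Zariski closure'' is false in general: take $P=\langle\mathrm{diag}(A,B)\rangle\le\mathrm{GL}(4,\mathbb{Z})$ with $A,B\in\mathrm{GL}(2,\mathbb{Z})$ hyperbolic and having multiplicatively independent eigenvalues; then $h(P)=1$ but $\overline{P}$ is a two-dimensional torus, and $\mathbb{Z}$ is not a lattice in a two-dimensional real group. More to the point, the inequality $h(P)\le\dim\overline{P}$, while plausible, is exactly where the arithmetic enters: the semisimple quotient $P/(P\cap U)$ lands in a torus, and bounding its rank by the torus dimension is precisely a Dirichlet-type statement about units. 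You have correctly flagged this as the ``genuinely technical ingredient'', but the justification offered does not work; filling it requires either the plinth/Dirichlet argument that the paper uses, or an appeal to the virtual cohomological dimension of $\mathrm{GL}(h,\mathbb{Z})$ (Borel--Serre), which is a much heavier import. A secondary point: your bound $h(K)\le r(h-r)$ on the stabiliser relies on $Z^1(N/M,M)$ having rank $\le r(h-r)$, which is immediate when $N/M$ is torsion-free (it is then generated by $h-r$ elements), but $N/M$ need not be torsion-free in your setup and the extra finite quotient can in principle contribute up to $r$ further to the rank, which would spoil the final arithmetic at $r=h-1$. The paper avoids this by working with the torsion-free plinth tower $N_s$.
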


First, we require a lemma.

\begin{lemma}\label{yaya} Let $\alpha$ be an automorphism of a group $G$ and suppose that $\alpha$ centralizes a normal subgroup $N\leq G$ of finite index $m$. Then $\alpha^{m!}$ is an inner automorphism.
\end{lemma}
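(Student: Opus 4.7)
The plan is to first reduce $\alpha$ to an automorphism that also acts trivially on $G/N$. Because any automorphism of a finite group fixes the identity, $\mathrm{Aut}(G/N)$ embeds in $S_{m-1}$, so the induced automorphism $\bar\alpha$ has order dividing $(m-1)!$. Setting $\beta := \alpha^{(m-1)!}$, we obtain an automorphism that centralizes $N$ \emph{and} acts trivially on $G/N$. Hence for every $g \in G$ the element $\nu(g) := \beta(g)g^{-1}$ lies in $N$. Applying $\beta$ to $gng^{-1} \in N$ for $n \in N$ and using $\beta|_N = \mathrm{id}$ shows that $g^{-1}\beta(g)$ commutes with every element of $N$, whence $\nu(g) \in N \cap C_G(N) = Z(N)$.

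Next I would check that $\nu$ is a $1$-cocycle $G \to Z(N)$ for the conjugation action, $\nu(g_1g_2) = \nu(g_1) \cdot g_1\nu(g_2)g_1^{-1}$, vanishing on $N$. Since $N$ acts trivially on $Z(N)$, $\nu$ descends to a cocycle $\bar\nu \in Z^1(G/N, Z(N))$. Invoking the classical fact that the order $m = |G/N|$ annihilates $H^1(G/N, Z(N))$, I obtain $z \in Z(N)$ with $\nu(g)^m = gzg^{-1}z^{-1}$ for every $g \in G$.

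A short induction, using that $\beta$ fixes each $\nu(g) \in N$, yields $\beta^s(g) = \nu(g)^s g$ for all $s \geq 1$; in particular $\beta^m(g) = \nu(g)^m g = gzg^{-1}z^{-1}g$. Taking $c := z^{-1} \in G$, I get $cgc^{-1}g^{-1} = z^{-1}gzg^{-1}$. Now $Z(N)$ is characteristic in $N$ and hence normal in $G$, so $gzg^{-1} \in Z(N)$; as $Z(N)$ is abelian, $z^{-1}$ and $gzg^{-1}$ commute, giving $z^{-1}gzg^{-1} = gzg^{-1}z^{-1} = \nu(g)^m$. Thus $cgc^{-1} = \beta^m(g)$ for every $g$, i.e.\ $\beta^m$ equals conjugation by $c$. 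Since $\beta^m = \alpha^{m \cdot (m-1)!} = \alpha^{m!}$, the lemma follows.

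The only nonroutine input is the cohomological annihilation $m \cdot H^1(G/N, Z(N)) = 0$; the remainder is bookkeeping that combines the identity $\beta^s(g) = \nu(g)^s g$ with the normality of $Z(N)$ in $G$.
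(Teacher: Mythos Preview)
Your argument is correct. Both your proof and the paper's begin identically by passing to $\beta=\alpha^{(m-1)!}$, which stabilizes the series $1\le N\le G$; they diverge in how they show that $\beta^m$ is inner. You package the computation cohomologically: the map $\nu(g)=\beta(g)g^{-1}$ is a $Z(N)$-valued $1$-cocycle that factors through $G/N$, and the standard annihilation $m\cdot H^1(G/N,Z(N))=0$ produces a coboundary witnessing that $\beta^m$ is conjugation by an element of $Z(N)$. The paper instead works inside $K=G\langle\alpha\rangle$, takes the $G$-conjugacy class $\{\beta_1,\dots,\beta_s\}$ of $\beta$, observes that these commute (they all stabilize $1\le N\le G$), and writes $\beta^s=\bigl(\prod_i\beta_i\bigr)\cdot\prod_i(\beta\beta_i^{-1})\in C_K(G)\cdot N$, whence $\beta^s$, and therefore $\beta^m$, is inner. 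The paper's averaging over conjugates is precisely the explicit transfer/corestriction that underlies the vanishing of $m\cdot H^1$, so the two arguments are essentially the same idea in different dress: yours is cleaner conceptually and quotes a standard cohomological fact, while the paper's is self-contained and avoids any appeal to cohomology. One minor wrinkle in your write-up: the computation shows $g^{-1}\beta(g)\in N\cap C_G(N)=Z(N)$, and to conclude the same for $\nu(g)=\beta(g)g^{-1}=g\bigl(g^{-1}\beta(g)\bigr)g^{-1}$ you should note (as you do later) that $Z(N)$ is normal in $G$.
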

\begin{proof} Since the group of automorphisms of $G/N$ has order dividing $(m-1)!$, it follows that $\beta=\alpha^{(m-1)!}$ stabilizes the series $1\leq N\leq G$. Let $\beta^G=\{\beta_1,\dots,\beta_s\}$. Note that the automorphisms $\beta_1,\dots,\beta_s$ commute. Therefore $\prod_{i=1}^{s}\beta_i$ centralizes $G$. Further, for every $i$ the element $\beta{\beta_i}^{-1}$ belongs to $N$. Let $K=G\langle\alpha\rangle$. Write $$\beta^s=\prod_{i=1}^{s}\beta_i\cdot\prod_{i=1}^{s}\beta{\beta_i}^{-1}\in C_K(G)N.$$ We see that $\beta^s$ is an inner automorphism of $G$ (induced by an element of $N$). Since $s$ is a divisor of $m$, we conclude that $\beta^m$ is an inner automorphism of $G$. It remains to note that $\beta^m=\alpha^{m!}$.
\end{proof}

The proof of Proposition \ref{polyc} will use the concept of plinth. A plinth of a group $G$ is a non-trivial finitely generated free abelian
normal subgroup $A$ containing no non-trivial subgroup of lower rank which is normal in any subgroup of finite index in $G$. Thus $G$ and all its subgroups of finite index act rationally irreducibly on $A$.

\begin{proof}[Proof of Proposition \ref{polyc}] If $h(N)=0$, then $h(\Gamma)=0$ so we will assume that both groups $N$ and $\Gamma$ are infinite. We write $G$ for the product of $N$ by $\Gamma$. By \cite[Exercise 1C9]{segal} $N$ contains a characteristic infinite free abelian subgroup. Therefore $N$ also contains a plinth $N_1$ of some normal subgroup $G_1$ of finite index in $G$ (see \cite[Theorem 7.1.10]{infsol}). If $(N\cap G_1)/N_1$ is infinite, we repeat the above and find a plinth $N_2/N_1$ contained in $(N\cap G_1)/N_1$ of some normal subgroup $G_2/N_1$ of finite index in $G_1/N_1$. Continuing the process we find a series $$1<N_1<\dots<N_s$$ and a subgroup $G_s$ of finite index in $G$ such that $N_s$ has finite index in $N\cap G_s$, the subgroups $N_i$ are normal in $G_s$ and each factor $N_{i+1}/N_i$ is a plinth for $G_s/N_i$.

Set $K=G_s\cap\Gamma$. It is clear that $h(N_s)=h$ and $h(K)=h(\Gamma)$ so it is sufficient to show that $h(K)$ is at most $h^2+2h$. Set $\Delta=C_K(N_s)$, $\Delta_1=C_K(N_1)$, $\Delta_2=C_K(N_s/N_1)$, and $\Delta_0=\Delta_1\cap\Delta_2$. 

Let $h_1=h(N_1)$. The group $K/\Delta_1$ faithfully acts on $N_1$ and so $K/\Delta_1$ embeds in $GL(h_1,\Bbb Z))$. By \cite[Theorem 3.1.8]{infsol} $K/\Delta_1$ is abelian-by-finite. Now we use the fact that rationally irreducible abelian subgroups of $GL(n,\Bbb Z))$ have torsion-free rank at most $n-1$ by the Dirichlet Unit Theorem and conclude that $h(K/\Delta_1)\leq h_1-1$. 

Set $h_2=h-h_1$. The group $K/\Delta_2$ naturally acts on $N_s/N_1$. Arguing by induction on $h$ we can assume that $h(K/\Delta_2)\leq {h_2}^2+2h_2$. Further, the group $\Delta_0/\Delta$ acts faithfully on $N_s$ and stabilizes the series $1\leq N_1\leq N_s$. By \cite[Proposition 1B11]{segal} $\Delta_0/\Delta$ is isomorphic to a subgroup of $Der(N_1,N_s/N_1)$ which is of rank $h_1h_2$.

Finally, note that if $[N:N_s]=m$, by Lemma \ref{yaya} $\Delta^{m!}$ embeds in the group of inner automorphisms of $N$ and therefore $h(\Delta)\leq h$. Thus,
$$h(K)\leq h(K/\Delta_1)+h(K/\Delta_2)+h(\Delta_0/\Delta)+h(\Delta)$$
$$\leq h_1-1+{h_2}^2+2h_2+h_1h_2+h<h^2+2h.$$
\end{proof}

\section{The main theorem}
It is also easy to see that any orderable group is torsion-free. Moreover, if $x,y$ are elements of an orderable group such that $[x,y^m]=1$ for some $m\geq1$, then $x$ and $y$ commute \cite[Lemma 2.5.1 $(i)$]{BMR}. The class of orderable groups is closed under taking subgroups but a quotient of an orderable group is not necessarily orderable \cite[Section 2.1]{BMR}. A subgroup $C$ of an ordered group $(G,\leq)$ is called {\it convex} if $x\in C$ whenever $1\leq x\leq c$ for some $c\in C$. Obviously $\{1\}$ and $G$ are convex subgroups of $G$; and, if $C$ is a convex subgroup, then every conjugate of $C$ is convex. It is also clear that all convex subgroups of an ordered group form, by inclusion, a totally ordered set, which is closed under intersection and union. If $C$ and $D$ are convex subgroups of an ordered group $G$, with $C<D$, and there is not a convex subgroup $H$ of $G$ such that $C<H<D$, we say that the pair $(C,D)$ is a {\it convex jump} in $G$. Orders on a group $G$ in which $\{1\}$ and $G$ are the only convex subgroups are very well known. By a result of H${\rm \ddot{o}}$lder \cite[Theorem\ 1.3.4]{BMR}, a group $G$ with such an order is order-isomorphic to a subgroup of the additive group of the real numbers under the natural order. This implies that, if $(C,D)$ is a convex jump of an ordered group, then $C$ is normal in $D$ and $D/C$ is abelian \cite[Lemma 1.3.6]{BMR}. f $x,y$ are elements of an orderable group such that $[x,y^m]=1$ for some $m\geq1$, then $x$ and $y$ commute \cite[Lemma 2.5.1 $(i)$]{BMR}. The following lemma can be easily deduced from the fact that in an orderable group $[x,y^m]=1$ for some $m\geq1$ implies that $[x,y]=1$.

\begin{lemma}\label{zi}
Let $G$ be an orderable group having a nilpotent subgroup $N$ of finite index. Then $G$ is nilpotent of the same class as $N$.
\end{lemma}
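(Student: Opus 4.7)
The plan is to argue by induction on the nilpotency class $c$ of $N$, proving simultaneously that $G$ is nilpotent and that its class equals $c$. The base case $c=0$ is immediate: $N=1$ forces $G$ to be finite, and since orderable groups are torsion-free, $G$ is trivial.

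For the inductive step, I first replace $N$ by its core $M=\bigcap_{g\in G}N^g$, which is normal of finite index in $G$. By iterating Lemma~\ref{ni1}, a finite-index subgroup of a torsion-free nilpotent group inherits the class, so $M$ is nilpotent of class exactly $c$. The central observation is then that $Z(M)\leq Z(G)$: for $z\in Z(M)$ and $g\in G$, setting $L=[G:M]$ gives $g^{L}\in M$, so $[z,g^{L}]=1$, and the cited R-group property $[x,y^m]=1\Rightarrow[x,y]=1$ (Lemma 2.5.1(i) of \cite{BMR}) forces $[z,g]=1$.

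To apply induction, I would pass to $G/Z(M)$, in which the image of $M$ is nilpotent of class $c-1$, normal, and of the same finite index. Since quotients of orderable groups need not be orderable, I plan to replace $Z(M)$ by its convex closure $C$ in $G$ for a suitable compatible order on $G$. Because conjugation preserves the order, $C$ is normal in $G$, and $G/C$ is again orderable. The image $MC/C\cong M/(M\cap C)$ is a quotient of $M/Z(M)$, hence nilpotent of class at most $c-1$, and of finite index in $G/C$. The inductive hypothesis applied to $G/C$ then gives $\gamma_c(G)\leq C$. To conclude $\gamma_{c+1}(G)=1$, it suffices to verify $C\leq Z(G)$, whence $[\gamma_c(G),G]\leq[C,G]=1$, so $G$ is nilpotent of class at most $c$; since $N\leq G$ already has class $c$, the class of $G$ equals $c$ as claimed.

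The main obstacle is the verification that the convex closure of a central subgroup is itself central. Since $Z(G)$ is isolated (another direct consequence of the R-group property: $g^n\in Z(G)$ implies $[h,g^n]=1$, hence $[h,g]=1$, for every $h\in G$) and elements of $C$ are order-theoretically squeezed between elements of $Z(M)\leq Z(G)$, I plan to close the argument by an order-theoretic computation exploiting that for $c\in C$ with $1\leq c\leq z$, $z\in Z(M)$, and any $g\in G$, the conjugate $g^{-1}cg$ satisfies $1\leq g^{-1}cg\leq g^{-1}zg=z$, and pushing this through the characterization of the convex closure to force $g^{-1}cg=c$. Modulo this technical point, the induction runs cleanly, and the class equality at the end follows automatically from $N\leq G$.
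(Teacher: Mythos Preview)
Your induction scheme and the reduction to the convex closure $C$ are set up correctly through step~6: you legitimately obtain $\gamma_c(G)\le C$. The gap is precisely where you flag it, and your proposed fix does not work. From $1\le c\le z$ with $z\in Z(G)$ you correctly get $1\le g^{-1}cg\le z$ for every $g$, but two elements lying in the same order-interval are in no way forced to be equal; all this yields is $[c,g]\in C$, i.e.\ normality of $C$, which you already had. Iterating only gives $w^{-2^k}\le [c,{}_k\,g]\le w^{2^k}$ for a suitable central $w$, which is useless. The isolation of $Z(G)$ is likewise irrelevant here: isolation controls powers, not order-bounds, and nothing in your setup produces an equation $c^n\in Z(G)$ for $c\in C$. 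In short, ``convex closure of a central subgroup is central'' is exactly the kind of statement that needs a genuine argument (and is not obviously true in an arbitrary bi-ordered group), and you have not supplied one. Without it the induction does not close.

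The paper does not spell out a proof, but the hint (``easily deduced from $[x,y^m]=1\Rightarrow[x,y]=1$'') points to a more direct route that avoids convex closures altogether: with $M$ the normal core, show by induction on $i$ that $Z_i(M)\le Z_i(G)$. The step from $i$ to $i+1$ needs exactly the implication $[z,g^m]\in Z_i(G)\Rightarrow[z,g]\in Z_i(G)$, i.e.\ that $G/Z_i(G)$ again enjoys the property $[x,y^m]=1\Rightarrow[x,y]=1$. Establishing this inheritance (or an equivalent statement about isolators of the upper central series in orderable groups) is the real content; once you have it, $M=Z_c(M)\le Z_c(G)$, so $G/Z_c(G)$ is finite and hence trivial. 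If you want to keep your convex-closure framework, the honest way to finish is to show that one can \emph{choose} an order on $G$ in which $Z(M)$ (or $Z(G)$) is itself convex---but that again amounts to knowing that $G/Z(M)$ is orderable, which is essentially the same inheritance issue. (A minor point: your appeal to Lemma~\ref{ni1} to get that $M$ has class exactly $c$ requires $N$ finitely generated; this is harmless since you only need class $\le c$ and recover equality from $N\le G$ at the end.)
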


\begin{lemma}\label{cn}
Let $G$ be an orderable group in which for each $x$ there exists $n$ such that $E_n(x)$ satisfies $max$. Then each convex subgroup in $G$ is normal.
\end{lemma}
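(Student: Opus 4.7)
The plan is to argue by contradiction. Suppose some convex subgroup $C$ of $G$ fails to be normal; then there is $g\in G$ with $C^g\neq C$. Conjugation by $g$ is order-preserving, so $C^g$ is also convex, and the convex subgroups of $G$ form a chain under inclusion. Replacing $g$ by $g^{-1}$ if necessary, I may assume $C^g\subsetneq C$. I would then pick $c\in C\setminus C^g$ with $c>1$. Since $c\notin C^g$ while $c>1$, convexity of $C^g$ forces $c$ to exceed every positive element of $C^g$. Applying this to $(c^g)^m\in C^g$ produces the key inequality
\[(c^g)^m<c\qquad\text{for every }m\geq 1.\]

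Next I would invoke Lemma \ref{x^y} to conclude that $H:=\langle c\rangle^{\langle g\rangle}$ is finitely generated. Because $H$ is generated by $\{c^{g^k}:k\in\mathbb{Z}\}$, a finite sub-collection already suffices, so there exist integers $k_1<k_2<\cdots<k_r$ with $H=\langle c^{g^{k_1}},\dots,c^{g^{k_r}}\rangle$. Conjugating the displayed inequality by the order-preserving map $x\mapsto x^{g^{k_1-1}}$ yields
\[(c^{g^{k_1}})^m<c^{g^{k_1-1}}\qquad\text{for every }m\geq 1.\]

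To conclude, set $M:=c^{g^{k_1}}$. Iterating $c^g<c$ shows $1<c^{g^{k_i}}\leq M$ for every $i$, so each generator of $H$ has absolute value at most $M$. A routine induction, based only on $u\leq|u|$ and on the order-preserving nature of multiplication, gives that any word of length $\ell$ in letters of absolute value at most $M$ has absolute value at most $M^\ell$. Since $c^{g^{k_1-1}}\in H$ is such a word for some finite $\ell$, one obtains $c^{g^{k_1-1}}\leq M^\ell$, contradicting the displayed inequality taken at $m=\ell$. Hence every convex subgroup of $G$ is normal. The crux of the argument is the combination of the Engel-type finite generation provided by Lemma \ref{x^y} with the convexity-forced Archimedean inequality $(c^g)^m<c$; once these are in hand, the elementary word-length estimate in the ordered group delivers the contradiction immediately.
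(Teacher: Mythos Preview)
Your argument is correct and shares the paper's overall strategy: argue by contradiction, use that convex subgroups form a chain, and invoke Lemma~\ref{x^y} to obtain that $H=\langle c\rangle^{\langle g\rangle}$ is finitely generated by $c^{g^{k_1}},\dots,c^{g^{k_r}}$. The only difference is in how the final contradiction is extracted. In your notation (with $C^{g}\subsetneq C$, so the chain $C^{g^{i}}$ is decreasing in $i$), the paper simply observes that each generator $c^{g^{k_i}}$ lies in the \emph{subgroup} $C^{g^{k_i}}\leq C^{g^{k_1}}$, hence $H\leq C^{g^{k_1}}$; in particular $c^{g^{k_1-1}}\in C^{g^{k_1}}$, which after conjugating by $g^{1-k_1}$ gives $c\in C^{g}$, the desired contradiction. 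Your route replaces this one-line subgroup containment by the Archimedean inequality $(c^{g})^{m}<c$ together with a word-length bound $|w|\leq M^{\ell}$. Both work; the paper's version is shorter because $C^{g^{k_1}}$ is already closed under products and inverses, so no inequality bookkeeping is needed, whereas your approach makes more explicit use of the bi-invariant order and would go through in any situation where one has such an order but perhaps not a convenient containing subgroup.
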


\begin{proof} Suppose that $C$ is convex and not normal in $G$. Since convex subgroups form a chain, we have either $C^x<C$ or $C<C^x$ for some $x\in G$. Without loss of generality assume that $C<C^x$ and let $c^x\in C^x\backslash C$ for a suitable $c\in C$. Then $C^{x^i}<C^{x^{i+1}}$ for any integer $i$. Moreover, by Lemma \ref{x^y}, the subgroup $\langle c\rangle^{\langle x\rangle}$ is finitely generated so that $\langle c\rangle^{\langle x\rangle}=\langle c^{x^{i_1}},\ldots,c^{x^{i_k}}\rangle$ where $i_1,\ldots,i_k$ are integers. We may assume $i_1<i_2<\ldots<i_k$. It follows that $\langle c\rangle^{\langle x\rangle}\leq C^{x^{i_k}}$. Hence $c^{x^{i_{k}+1}}\in C^{x^{i_k}}$ and therefore $c^x\in C$, a contradiction.
\end{proof}

\begin{lemma}\label{nilpo} Let $n,h\geq1$. Let $G$ be an orderable group in which $E_n(x)$ is polycyclic with $h(E_n(x))\leq h$. Then $G'$ is nilpotent with $(h,n)$-bounded class.
\end{lemma}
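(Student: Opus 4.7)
The plan is to reduce to $G$ finitely generated, build a convex normal subgroup $\widetilde T$ of $G$ whose quotient is nilpotent via Kim--Rhemtulla, show $\widetilde T$ is polycyclic of bounded Hirsch length (hence nilpotent of bounded class by orderability), and combine the two layers.

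First I would reduce to $G=\langle x_1,\ldots,x_d\rangle$ finitely generated, since nilpotency of $G'$ with class at most $c^{*}=c^{*}(h,n)$ can be detected on finitely generated subgroups, each of which inherits the hypothesis. Because each $E_n(x)$ is polycyclic it satisfies \textit{max}, so Lemma \ref{BM} yields that $G'$ is finitely generated, and iterating Corollary \ref{G^s} gives that every term of the derived series of $G$ is finitely generated.

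Next, set $T=\langle E_n(x):x\in G\rangle$. The identity $E_n(x)^g=E_n(x^g)$ makes $T$ normal in $G$, and $G/T$ is $n$-Engel by construction. Let $\widetilde T$ be the convex closure of $T$ in the ordered group $G$ (the smallest convex subgroup of $G$ containing $T$); by Lemma \ref{cn}, $\widetilde T$ is normal in $G$, so $G/\widetilde T$ is orderable and still $n$-Engel, hence nilpotent of class at most some $c_0=c_0(n)$ by Kim--Rhemtulla \cite{kr2}. The key technical step---and what I expect to be the main obstacle---is to show that $\widetilde T$ is polycyclic with Hirsch length bounded by a function of $(h,n)$; this should use that $T\le G'$ is finitely generated, the \textit{max} property of each $E_n(x)$, Proposition \ref{polyc} (the quantitative Malcev theorem, controlling Hirsch lengths of soluble automorphism groups), Lemma \ref{bb} (confining $\gamma_f(H\langle a\rangle)$ inside the polycyclic $E_n(a)$ for any nilpotent normal $H$), and the rigidity imposed by orderability on the chain of convex subgroups. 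Without such a bound on $h(\widetilde T)$, the convex-closure operation could enlarge $T$ uncontrollably.

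Granting that $\widetilde T$ is polycyclic of $(h,n)$-bounded Hirsch length, Corollary \ref{ni3} combined with Lemma \ref{aa} applied along a subnormal series of $\widetilde T$ gives that $\widetilde T$ is nilpotent with $(h,n)$-bounded class $c_2$. Since $\gamma_{c_0+1}(G)\le\widetilde T$, applying Lemma \ref{bb} inside $\widetilde T\langle a\rangle$ for each generator $a$---together with Lemma \ref{nilinili} to combine Engel-type behaviour of the generators once $G'$ is seen to be nilpotent, and Lemma \ref{zi} to pass orderably between finite-index layers---then bounds the nilpotency class of $G'$ by a function of $(h,n)$ alone.
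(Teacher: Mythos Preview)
Your outline has a genuine gap at exactly the step you flag as the main obstacle: you have no argument that $\widetilde T$ is polycyclic, let alone of bounded Hirsch length. None of the tools you list applies before you already know some nilpotency or polycyclicity of the bottom layer. Lemma \ref{bb} requires a nilpotent normal subgroup $H$ to begin with; Proposition \ref{polyc} needs a polycyclic group to act on; and knowing that $G'$ is finitely generated (Lemma \ref{BM}) does not by itself make the subgroup $T$ or its convex closure finitely generated, much less polycyclic. So the argument is circular as written: polycyclicity of $\widetilde T$ is essentially a consequence of what you are trying to prove, not an input to it. A secondary point: Kim--Rhemtulla only gives that an orderable $n$-Engel group is nilpotent, not that its class is bounded by a function of $n$; for that one must invoke Zelmanov (or Burns--Medvedev), as the paper does.

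The paper avoids controlling $\widetilde T$ altogether. Instead it argues: for \emph{any} convex (hence normal) subgroup $C$ with $G/C$ soluble, Corollary \ref{G^s} makes every derived term of $G/C$ finitely generated, so $G/C$ has finite rank and \cite[Theorem 3.3.1]{BMR} forces $(G/C)'$ to be nilpotent; then, since each $E_n(x)$ maps into the nilpotent $(G/C)'$, every element of $(G/C)'$ is Engel, and Corollary \ref{ni3} plus Zelmanov bound the derived length of $G/C$ by some $d=d(h,n)$. Now let $R$ be the intersection of all such $C$; then $G^{(d+1)}\le R$, $R$ is finitely generated, and if $R\neq 1$ one finds a convex jump $D<R$ with $R/D$ abelian, making $G/D$ soluble and contradicting the minimality of $R$. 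Hence $R=1$, $G$ is soluble of derived length at most $d$, and one more appeal to the finite-rank theorem gives $G'$ nilpotent of $(h,n)$-bounded class. The key idea you are missing is this descent on convex subgroups combined with the finite-rank result from \cite{BMR}, which replaces any need to bound $\widetilde T$ directly.
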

\begin{proof} It is sufficient to establish the result under the additional hypothesis that $G$ is finitely generated. Thus, assume that $G$ is finitely generated. We know that the convex subgroups in $G$ are normal and let $C$ be a convex subgroup such that $G/C$ is soluble. Since by Corollary \ref{G^s} all terms of the derived series of $G$ are finitely generated, it follows that $G/C$ has finite rank and therefore, by \cite[Theorem 3.3.1]{BMR}, the derived group $(G/C)'$ is nilpotent. We conclude that the image of $E_n(x)$ in $G/C$ is nilpotent for each $x\in G$. Hence, each element of $(G/C)'$ is Engel and so, by Corollary \ref{ni3}, there is an $h$-bounded number $n_0$ such that each element of $(G/C)'$ is $n_0$-Engel. A result of Zelmanov \cite{zelma} states that a nilpotent torsion-free $n_0$-Engel group is nilpotent with bounded nilpotency class (see also \cite{bume}). In particular, we deduce that $G/C$ has $n$-bounded derived length, say $d$. 

Let $R$ be the intersection of all (normal) convex subgroups $N$ of $G$ such that $G/N$ is soluble. The above argument shows that $G^{(d+1)}\leq R$. Since all terms of the derived series of $G$ are finitely generated, we conclude that $R$ is finitely generated, too. If $R\neq1$, among the convex subgroups properly contained in $R$ we can choose a maximal one, say $D$. It follows that $R/D$ is abelian and so $G/D$ is soluble. This is a contradiction since $R$ is the intersection of all convex subgroups $N$ of $G$ such that $G/N$ is soluble. The conclusion is that $R=1$  and $G$ is soluble with derived length at most $d$. Again we observe that $G$ has finite rank whence $G'$ is nilpotent with $(h,n)$-bounded class.
\end{proof}

We are now ready to complete the proof of our main theorem which we restate here for the reader's convenience.
\medskip

\noindent{\sc Theorem.} {\em Let $h,n$ be positive integers and $G$ an orderable group in which $E_n(x)$ is polycyclic with $h(E_n(x))\leq h$ for every $x\in G$. There are $(h,n)$-bounded numbers $h^*$ and $c^*$ such that $G$ has a finitely generated normal nilpotent subgroup $N$ with $h(N)\leq h^*$ and $G/N$ nilpotent of class at most $c^*$. }
\medskip
\begin{proof} Choose an arbitrary element $x\in G$. Since the Hirsch length of a subgroup of infinite index of a polycyclic group is strictly smaller than that of the group, it follows that in the series $$E_n(x)\geq[E_n(x),x]\geq[E_n(x),x,x]\geq\dots$$ at most $h$ terms $[E_n(x),{}_i\,x]$ have infinite index in $[E_n(x),{}_{i-1}\,x]$. In view of Lemma \ref{nilpo}, $E_n(x)$ is nilpotent. The element $x$ naturally acts on $E_n(x)$ by conjugation and Lemma \ref{ni2} shows that if, for some $i\geq1$, the subgroup $[E_n(x),{}_i\,x]$ has finite index in $[E_n(x),{}_{i-1}\,x]$, then $[E_n(x),{}_{i+s}\,x]$ has finite index in $[E_n(x),{}_{i-1}\,x]$ for any $s\geq1$. It follows that $[E_n(x),{}_{h+s}\,x]$ has finite index in $[E_n(x),{}_{h}\,x]$ for any $s\geq1$. For all $x\in G$ set $U(x)=[E_n(x),{}_{h}\,x]$.
It follows that an element $x$ is an Engel element if and only if $U(x)=1$ and each Engel element in $G$ is $(n+h)$-Engel.

We know from Lemma \ref{nilpo} that $G'$ is nilpotent with $(h,n)$-bounded class. By Lemma \ref{bb} there exists a bounded number $f$ such that for each element $x\in G$ we have $\gamma_f(\langle x,G'\rangle)\leq E_{n+h}(x)\leq U(x)$. Observe that for each $i$ the subgroup $[U(x),{}_i\,x]$ is contained in $\gamma_{n+h+i+1}(\langle x,G'\rangle)$. Since by our assumptions for each positive $i$ the subgroup $[U(x),{}_i\,x]$ has finite index in $U(x)$, we conclude that also $\gamma_{f+i}(\langle x,G'\rangle)$ has finite index in $U(x)$. Em particular, $h(\gamma_{f+i}(\langle x,G'\rangle))=h(U(x))$. In the sequel we will use without mentioning explicitly that all subgroups of the form $\gamma_j(\langle x,G'\rangle)$ are normal in $G$.

Now choose $a\in G$ such that the Hirsch length $h_0$ of $\gamma_f(\langle a,G'\rangle)$ is as big as possible. If $h_0=0$, then $U(x)=1$ for each $x\in G$ and so all elements of $G$ are $(n+h)$-Engel. Hence, by Zelmanov's result \cite{zelma}, $G$ is nilpotent with bounded class. Therefore we will assume that $h_0\geq1$. Let $b\in C_G(\gamma_f(\langle a,G'\rangle))$. Recall that in an orderable group $[x,y^m]=1$ implies that $[x,y]=1$. Since $b$ centralizes $\gamma_f(\langle a,G'\rangle)$, which is a subgroup of finite index in $U(a)$, it follows that $b$ centralizes $U(a)$. Set $S=\gamma_f(\langle b,G'\rangle)$. Define $T=1$ if $S$ is abelian and $T=Z(S)$ (the center of $S$) otherwise. Note that if $U(b)\neq1$, then $S/T$ is infinite. Since $b$ centralizes $U(a)$, we deduce that $U(b)\cap U(a)\leq T$. Further, observe that $a$ acts on $S/T$ as an $(n+h)$-Engel element. Since $b\in C_G(\gamma_f(\langle a,G'\rangle))$, the action of $ab$ on $\gamma_f(\langle a,G'\rangle)$ is the same as that of $a$. Therefore the subgroups of the form $[\gamma_f(\langle a,G'\rangle),{}_iab]$ have finite index in $\gamma_f(\langle a,G'\rangle)$ for each $i$. It follows that $\gamma_f(\langle ab,G'\rangle)$ intersects $\gamma_f(\langle a,G'\rangle)$ by a subgroup having finite index in both $\gamma_f(\langle ab,G'\rangle)$ and $\gamma_f(\langle a,G'\rangle)$. Therefore the Hirsch length of $\gamma_f(\langle ab,G'\rangle)$ is precisely $h_0$. Since $b$ centralizes a subgroup of finite index in $\gamma_f(\langle ab,G'\rangle)$, we conclude that $b$ centralizes $\gamma_f(\langle ab,G'\rangle)$. Taking into account that $S\cap \gamma_f(\langle a,G'\rangle)\leq T$ we further deduce that $S\cap\gamma_f(\langle ab,G'\rangle)\leq T$. Hence, $ab$ acts on $S/T$ as an $(n+h)$-Engel element. Thus, both $a$ and $ab$ act on $S/T$ as Engel elements. Lemma \ref{nilinili} now shows that also $b$ acts on $S/T$ as an Engel element. We know that $[S,{}_i\,b]$ has finite index in $S$ for every $i$. Therefore we now deduce that $S=1$, that is, $b$ is an Engel element in $G$. Recall that $b$ was chosen in $C_G(\gamma_f(\langle a,G'\rangle))$ arbitrarily. Thus, each element of $C_G(\gamma_f(\langle a,G'\rangle))$ is $(n+h)$-Engel in $G$.

Let $F$ be the Fitting subgroup of $G$. We know that $F$ consists of $(n+h)$-Engel elements. Therefore, by Zelmanov's result \cite{zelma}, $F$ is nilpotent with bounded class. Moreover both $G'$ and $C_G(\gamma_f(\langle a,G'\rangle))$ are contained in $F$. Further, using Lemma \ref{zi} we note that $G/F$ is torsion-free. The group $G/C_G(\gamma_f(\langle a,G'\rangle))$ faithfully acts on $\gamma_f(\langle a,G'\rangle)$. By Proposition \ref{polyc} $G/C_G(\gamma_f(\langle a,G'\rangle))$ has $h$-bounded Hirsch length. Therefore $G/F$ is abelian with $h$-bounded number of generators. Write $G=\langle F,a_1,\dots,a_k\rangle$, where $a_1,\dots,a_k$ are (boundedly many) generators of $G$ modulo $F$. For $i=1,\dots,k$ set $G_i=\langle F,a_i\rangle$. All the subgroups $G_i$ are normal in $G$ since $F$ contains $G'$. According to Lemma \ref{bb} there is an $(h,n)$-bounded number $f_0$ such that Then $\gamma_{f_0}(G_i)\leq E_n(a_i)$. Our hypotheses imply that $\gamma_{f_0}(G_i)$ is polycyclic with Hirsch length at most $h$ for each $i=1,\dots,k$. Let $N$ be the subgroup generated by all $\gamma_{f_0}(G_i)$. It follows that $N$ is polycyclic with Hirsch length at most $kh$. Further, $G/N$ is a product of $k$ normal subgroups $G_iN/N$, each of which is nilpotent of class at most $f_0-1$. It follows that $G/N$ is nilpotent of class at most $kf_0-k$. Thus, we can take $h^*=kh$ and $c^*=kf_0-k$.
\end{proof}

\end{document}